\title[Embedded eigenvalues]{Sharp bound for embedded eigenvalues  of Dirac operators with decaying potentials}
\author{Vishwam Khapre}
\address{ Department of Mathematics, Texas A\&M University, College Station, TX 77843-3368, USA} \email{vishwam28@tamu.edu }
\author{Kang Lyu}
\address{ School of Mathematics and Statistics, Nanjing University of Science and Technology, Nanjing, 210094, Jiangsu, People’s Republic of China}
\email{lvkang201905@outlook.com}
\author{Andrew Yu}
\address{ Phillips Academy, 180 Main St, Andover, MA 01810, USA} 
\email{andrewyu45@gmail.com }
\keywords{Dirac operators, canonical form, embedded eigenvalues, essential spectrum}
\thanks{{\em 2020 Mathematics Subject Classification.} Primary: 34L15. Secondary: 34A30}
\newcommand{\abs}[1]{\left\lvert #1 \right\rvert}
\theoremstyle{plain}
\newtheorem{theorem}{Theorem}[section]
\newtheorem{lemma}[theorem]{Lemma}
\newtheorem{proposition}[theorem]{Proposition}
\newcommand{\R}{\mathbb{R}}
\newcommand{\Z}{\mathbb{Z}}
\theoremstyle{definition}
\begin{document}
	\begin{abstract}
		We study  eigenvalues of the Dirac operator with canonical form
		\begin{equation}
		L_{p,q} \begin{pmatrix}
		u \\ v
		\end{pmatrix}=
		\begin{pmatrix}
		0 & -1 \\ 1 & 0
		\end{pmatrix}\frac{d}{dt}
		\begin{pmatrix}
		u \\ v
		\end{pmatrix}+\begin{pmatrix}
		-p & q \\ q & p
		\end{pmatrix}\begin{pmatrix}
		u \\ v
		\end{pmatrix},\nonumber
		\end{equation}	
		where $ p$ and $q$ are real functions.  Under the assumption that
		\begin{equation}
		\limsup_{x\to \infty}x\sqrt{p^2(x)+q^2(x)}<\infty,\nonumber
		\end{equation}
		the essential spectrum of $L_{p,q}$ is $(-\infty,\infty)$.
		We prove that $L_{p,q}$ has no  eigenvalues  if 
		$$\limsup_{x\to \infty}x\sqrt{p^2(x)+q^2(x)}<\frac{1}{2}.$$ Given any   $A\geq \frac{1}{2}$ and  any $\lambda\in\R$, we  construct functions  $p$ and $q$  such that $\limsup_{x\to \infty}x\sqrt{p^2(x)+q^2(x)}=A$ and $\lambda$ is an eigenvalue of the corresponding Dirac operator $L_{p,q}$. We also construct functions $p$ and $q$ so that the corresponding Dirac operator $L_{p,q}$ has any prescribed set {(finitely or countably many)} of eigenvalues.
	\end{abstract}
\maketitle
	\section{Introduction and main results}
	{The Schr\"odinger operator given by
		\begin{align}\label{e1}
		Hu=-u^{\prime\prime}+Vu
		\end{align}
		and the Dirac operator given by
		\begin{equation}\label{Luv}
		L \begin{pmatrix}
		u \\ v
		\end{pmatrix}=
		\begin{pmatrix}
		0 & -1 \\ 1 & 0
		\end{pmatrix}\frac{d}{dx}
		\begin{pmatrix}
		u \\ v
		\end{pmatrix}+\begin{pmatrix}
		p_{11} & p_{12} \\ p_{21} & p_{22}
		\end{pmatrix}\begin{pmatrix}
		u \\ v
		\end{pmatrix}
		\end{equation}
	are two basic models in mathematics and physics.  
		We are interested in the embedded eigenvalue (eigenvalue embeds into the essential spectrum) problem of Schr\"odinger operators and Dirac operators. For Schr\"odinger operators, the problem is well understood. Kato's classical results \cite{kato1959} show that if $\limsup_{x\to \infty}\abs{xV(x)}=A$, then the Schr\"odinger operator has no eigenvalues larger than $A^2$. Wigner and von Neumann's examples \cite{WignervonNeumantype} imply that there exist potentials with $A=8$, such that $\lambda=1$ is an eigenvalue of the associated Schr\"odinger operator. Finally, (see the survey \cite{sim18} for the history), Atkinson and Everitt \cite{atkinson1978bounds} obtained the sharp bound $\frac{4A^2}{\pi^2}$. They proved that there are no eigenvalues larger than $\frac{4A^2}{\pi^2}$, and for any $0<\lambda<\frac{4A^2}{\pi^2}$, there are potentials with 
			$\limsup_{x\to \infty}\abs{xV(x)}=A$ so that $\lambda$ is an eigenvalue of the associated Schr\"odinger operator.}
	
	{The connection between equations \eqref{e1} and \eqref{Luv}
		is straightforward. For example, by letting $p_{11}=V,$ and $p_{12}=p_{21}=p_{22}=0$, one can directly obtain 
	\begin{align}
		-u^{\prime\prime}+\lambda Vu=\lambda^2u\nonumber
\end{align} by 
\begin{equation} L \begin{pmatrix}
		u \\ v
	\end{pmatrix}
	=\lambda\begin{pmatrix}
		u \\ v
	\end{pmatrix}.\nonumber
\end{equation}} In this article, we study embedded eigenvalue problems of a particular type of Dirac operators {on $L^2[0,\infty)\bigoplus L^2[0,\infty)$}, namely Dirac operators with canonical form, 
	\begin{equation}\label{do}
	L_{p,q} \begin{pmatrix}
	u \\ v
	\end{pmatrix}=
	\begin{pmatrix}
	0 & -1 \\ 1 & 0
	\end{pmatrix}\frac{d}{dx}
	\begin{pmatrix}
	u \\ v
	\end{pmatrix}+\begin{pmatrix}
	-p & q \\ q & p
	\end{pmatrix}\begin{pmatrix}
	u \\ v
	\end{pmatrix},
	\end{equation}	
	where $p\in L^2[0,\infty
	)$ and $q\in L^2[0,\infty)$ are real functions (referred to as potentials).
	The canonical form of Dirac operators plays an important role in
	spectral theory \cite[Theorem 5.1]{Remling2018Canonicalbook}. In the study of  asymptotics of eigenvalues and the inverse problems of Dirac operators, it is crucial to use the canonical form
	\cite[pp. 185-187]{Levitan1991introduction}, \cite{Yang20072mDirac,Zhang2021inverseDirac}.	
We refer readers to~\cite{Gesztesy2017oscillation,Gesztesy2020Afunction,Gesztesy2020threshold,Harris1983moreprecise,Remling2020Essential,Hu2019TraceDirac} for more recent development  about various types of Dirac operators.  

%have also attracted a lot of attention from both mathematicians and physicists ~\cite{Gesztesy2017oscillation,Gesztesy2020Afunction,Gesztesy2020threshold,Harris1983moreprecise,Remling2020Essential,Hu2019TraceDirac}.

	{ For any $\phi_0\in[0,\pi)$, under the boundary condition 
		\begin{align}\label{boundarycondition}
		u(0)\sin\phi_0-v(0)\cos\phi_0=0,
		\end{align}
		the Dirac operator $L_{p,q}$ defined by \eqref{do} is self-adjoint.
	}
	
	{Denote by $\sigma_{ess}(L_{p,q})$ the essential spectrum of $L_{p,q}$. Recall that $\lambda\in \sigma_{ess}(L_{p,q})$ if and only if there is an orthonormal sequence $\{\varphi_n\}_{n=1}^{\infty}$ such that
		\begin{align}
		||L_{p,q}\varphi_n-\lambda\varphi_n||\to 0, \ n\to\infty.\nonumber
		\end{align}}
	It is well known that 
	\begin{align}
	\sigma_{ess}(L_{0,0})=(-\infty,\infty),\nonumber
	\end{align}
	and $L_{0,0}$ has no eigenvalues.

	By \cite[Theorem 6.4]{Weidmann1971oszillation}, if
	\begin{align}
	\sqrt{p^2(x)+q^2(x)}=o(1),\nonumber
	\end{align}
	as $x\to\infty,$ then 
	\begin{align}
	\sigma_{ess}(L_{p,q})=(-\infty,\infty).\nonumber
	\end{align}

	In the first part of our paper, under the assumption that $p$ and $q$ are Coulomb type potentials {(we ask that potentials have no singularity at $x=0$)},
	we study the question when $L_{p,q}$ has embedded eigenvalues.
	
	{	\begin{theorem}\label{ne}
			If $$\limsup_{x\to \infty}x\sqrt{p(x)^2+q(x)^2}=A<\frac{1}{2},$$
			then under any boundary condition \eqref{boundarycondition}, $L_{p,q}$ has no eigenvalues in $(-\infty,\infty)$.
		\end{theorem}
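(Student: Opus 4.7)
The natural approach is to pass to a Prüfer-type polar decomposition of the solution and track the growth of its amplitude. Suppose $\lambda \in \mathbb{R}$ is an eigenvalue with eigenfunction $(u,v) \in L^2[0,\infty) \oplus L^2[0,\infty)$. Writing out $L_{p,q}(u,v)^T = \lambda (u,v)^T$ componentwise, I obtain the first order system
\begin{align}
u' &= (\lambda - p)v - qu, \nonumber \\
v' &= -(\lambda + p)u + qv. \nonumber
\end{align}
Since this is a homogeneous linear first-order system, uniqueness of the initial value problem forces $R(x) := \sqrt{u(x)^2 + v(x)^2} > 0$ for every $x \geq 0$ (otherwise the solution vanishes identically). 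Introduce the Prüfer variables $u = R\cos\theta$, $v = R\sin\theta$.

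The first main step is to derive the amplitude equation. Differentiating $R^2 = u^2 + v^2$ and substituting the system yields, after the usual cancellation of the $\lambda$-terms,
\begin{equation}
(\ln R^2)'(x) = -2p(x)\sin(2\theta(x)) - 2q(x)\cos(2\theta(x)), \nonumber
\end{equation}
from which the pointwise Cauchy--Schwarz bound
\begin{equation}
\bigl|(\ln R^2)'(x)\bigr| \leq 2\sqrt{p(x)^2 + q(x)^2} \nonumber
\end{equation}
follows immediately. Notice that this bound is independent of $\theta$ and independent of $\lambda$, which is why the argument handles every real $\lambda$ uniformly.

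The second main step is to turn the decay assumption into a lower bound on $R$ that is incompatible with $L^2$ integrability. Fix $\varepsilon > 0$ with $A + \varepsilon < 1/2$. By hypothesis there exists $x_0 \geq 1$ such that $\sqrt{p(x)^2+q(x)^2} \leq (A+\varepsilon)/x$ for all $x \geq x_0$. Integrating the previous inequality from $x_0$ to $x$ gives
\begin{equation}
\ln R^2(x) \geq \ln R^2(x_0) - 2(A+\varepsilon)\ln(x/x_0), \nonumber
\end{equation}
so that $R^2(x) \geq C x^{-2(A+\varepsilon)}$ with $C = R^2(x_0)x_0^{2(A+\varepsilon)} > 0$. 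Since $2(A+\varepsilon) < 1$, we conclude
\begin{equation}
\int_{x_0}^{\infty} R^2(x)\,dx \geq C\int_{x_0}^{\infty} x^{-2(A+\varepsilon)}\,dx = \infty, \nonumber
\end{equation}
contradicting $\|u\|_2^2 + \|v\|_2^2 = \int_0^\infty R^2\,dx < \infty$. The boundary condition \eqref{boundarycondition} plays no role beyond guaranteeing that $(u,v)$ is a genuine eigenfunction of the self-adjoint realization.

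The argument is short and the only potential obstacle is verifying the amplitude identity; once that is in hand, the chain from decay hypothesis to $L^2$ contradiction is essentially forced. I do not foresee any genuine difficulty, although one should check that this proof does not require any smoothness beyond $p,q \in L^2_{\mathrm{loc}}$ (which is automatic from the hypothesis $x\sqrt{p^2+q^2}$ being eventually bounded), so that the Prüfer variables $R,\theta$ are well-defined absolutely continuous functions and the ODE manipulations are justified almost everywhere.
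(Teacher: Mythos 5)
Your proof is correct and takes essentially the same approach as the paper: both pass to Prüfer variables, derive the amplitude identity $(\ln R)' = -p\sin 2\theta - q\cos 2\theta$, bound its magnitude by $\sqrt{p^2+q^2}$, and integrate the decay hypothesis to show $R(x) \gtrsim x^{-(A+\varepsilon)}$ with $2(A+\varepsilon)<1$, contradicting $R\in L^2$. The only cosmetic difference is that you invoke Cauchy--Schwarz directly on $-p\sin 2\theta - q\cos 2\theta$, whereas the paper first writes $q=V\cos\varphi$, $p=V\sin\varphi$ so that the same bound appears as $|V\cos(2\theta-\varphi)|\le |V|$ — a representation the paper reuses heavily in the constructive Theorems 1.2--1.4, but which is not needed here.
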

		
		\begin{theorem}\label{ae}
			For any $\phi_0\in[0,\pi)$, $\lambda \in (-\infty,\infty),$ and  $A\geq\frac{1}{2}$, there exist potentials $p$ and $q$ such that 
			$$\limsup_{x\to \infty}x\sqrt{p(x)^2+q(x)^2}=A,$$
			and the Dirac operator $L_{p,q}$ has an eigenvalue $\lambda$ under the boundary condition \eqref{boundarycondition}.
		\end{theorem}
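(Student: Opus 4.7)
The plan is to construct the candidate eigenfunction first via a Prüfer-type representation, and then reverse-engineer the potentials $p$ and $q$ from it. Writing $u=\rho\cos\theta$, $v=\rho\sin\theta$ for a solution of $L_{p,q}(u,v)^T=\lambda(u,v)^T$, a direct calculation (dotting the two component equations with $(\cos\theta,\sin\theta)$ and $(-\sin\theta,\cos\theta)$) converts the eigenvalue equation into the Prüfer system
\begin{align*}
\theta' &= -\lambda - p\cos 2\theta + q\sin 2\theta,\\
(\ln\rho)' &= -p\sin 2\theta - q\cos 2\theta.
\end{align*}
Regarded pointwise as a linear system in the unknowns $(p,q)$, its coefficient matrix has determinant $1$, so one may invert and prescribe $\theta$ and $\rho$ freely. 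A short computation shows that the cross terms cancel, leaving the key identity
\[
p(x)^2+q(x)^2=\bigl(\theta'(x)+\lambda\bigr)^2+\bigl((\ln\rho)'(x)\bigr)^2.
\]

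Next I would take the simplest angle $\theta(x):=-\lambda x+\phi_0$. This kills the first summand above, so $\sqrt{p^2+q^2}=|(\ln\rho)'|$, and it makes the boundary condition \eqref{boundarycondition} automatic, since $u(0)\sin\phi_0-v(0)\cos\phi_0=\rho(0)\sin(\phi_0-\theta(0))=0$. The construction is then reduced to producing a smooth positive $\rho\in L^2[0,\infty)$ with $\limsup_{x\to\infty}x|(\ln\rho)'(x)|=A$; the corresponding $(u,v)$ will be an $L^2$ eigenfunction of $L_{p,q}$ at energy $\lambda$, and $\limsup x\sqrt{p^2+q^2}=A$ as required. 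For $A>\tfrac{1}{2}$ the pure power $\rho(x):=(x+1)^{-A}$ works immediately: $x|(\ln\rho)'(x)|=Ax/(x+1)\to A$ and $\int_0^\infty\rho^2<\infty$ because $2A>1$. The resulting explicit potentials are of Wigner--von~Neumann type, namely $\tfrac{A}{x+1}$ times the oscillatory factors $\pm\sin(2\lambda x-2\phi_0)$ and $\cos(2\lambda x-2\phi_0)$.

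The delicate step is the critical case $A=\tfrac{1}{2}$, where $(x+1)^{-1/2}\notin L^2$ and a pure power law no longer suffices. Here I would insert a logarithmic correction, e.g.
\[
\rho(x):=(x+e)^{-1/2}\bigl(\log(x+e)\bigr)^{-1}.
\]
Then $(\ln\rho)'(x)=-\tfrac{1}{2(x+e)}-\tfrac{1}{(x+e)\log(x+e)}$, so $x|(\ln\rho)'(x)|\to\tfrac{1}{2}$, while $\rho(x)^2=\bigl((x+e)\log^2(x+e)\bigr)^{-1}$ is integrable at infinity (substitute $y=\log(x+e)$). The resulting $p,q$ satisfy $\limsup x\sqrt{p^2+q^2}=\tfrac{1}{2}$, and $(\rho\cos\theta,\rho\sin\theta)$ is a genuine $L^2$ eigenfunction meeting the boundary condition. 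This borderline adjustment is the heart of the construction: the exponent $\tfrac{1}{2}$ appearing in Theorem~\ref{ne} is precisely the critical integrability exponent for $\rho$, so one must exploit the logarithmic gap between $L^2$ and merely bounded-mean decay in order to realize $A=\tfrac{1}{2}$ itself rather than only values strictly above it.
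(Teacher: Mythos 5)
Your argument is correct, and for $A>\tfrac12$ it is essentially the same as the paper's: both pick $p,q$ so that $2\theta-\varphi\equiv 0$ (in the paper's notation $q=V\cos\varphi$, $p=V\sin\varphi$ with $\varphi(x)=-2\lambda x+2\theta(0)$), which makes $(\ln R)'=-V$ and reduces everything to choosing a radial profile with $x V(x)\to A$ and $R\in L^2$. The small presentational difference is that you invert the Pr\"ufer system to prescribe $(\theta,\rho)$ directly and read off $p,q$, whereas the paper prescribes $(V,\varphi)$ and then uses the uniqueness theorem for the $\theta$-ODE to conclude $2\theta-\varphi\equiv 0$; these are the same move. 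Where you genuinely depart from the paper is the borderline case $A=\tfrac12$: the paper builds $V$ piecewise, setting $V(x)=(\tfrac12+\epsilon_n)/x$ on $[a_n,a_{n+1})$ with $\epsilon_n=\tfrac1{2n}$ and $a_n=e^{n^3}$, and then verifies $R\in L^2$ by summing the contributions over the blocks. You instead write down the single closed-form profile $\rho(x)=(x+e)^{-1/2}(\log(x+e))^{-1}$, for which $x\,|(\ln\rho)'(x)|\to\tfrac12$ and $\rho^2=( (x+e)\log^2(x+e))^{-1}$ is integrable by the substitution $y=\log(x+e)$. This is cleaner, avoids the block-by-block bookkeeping, and has the extra benefit that the resulting $p,q$ are globally $C^\infty$ rather than piecewise defined. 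Both constructions are correct; yours buys a shorter, more transparent treatment of the critical case, while the paper's piecewise scheme is the one that generalizes to the multi-eigenvalue constructions in Theorems~\ref{3} and~\ref{4}, so it is worth keeping both in mind.
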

	}

	We say that the potential is $C^{\infty}$ if $p,q$ are $C^{\infty}.$ 
	In the second part of the paper, we will construct $C^{\infty}$ potentials with which $L_{p,q}$ has many embedded eigenvalues.
	
	\begin{theorem}\label{3}
		Let $S=\{\lambda_j\}_{j=1}^N$ be a set of distinct real numbers. Let $\{\theta_j\}_{j=1}^N\subset [0,\pi)$ be a set of angles. There exist $C^{\infty}$ potentials satisfying
		$$\sqrt{p(x)^2+q(x)^2}= \frac{O(1)}{1+x},$$
		{where $O(1)$ depends on $S$}, such that the associated Dirac operator $L_{p,q}$ has {$L^2[0,\infty)\bigoplus L^2[0,\infty)$} solutions $(u_j,v_j)^T$ satisfying
		\begin{eqnarray}
		L_{p,q}\begin{pmatrix}
		u \\v
		\end{pmatrix}=\lambda_j
		\begin{pmatrix}
		u \\ v
		\end{pmatrix}\nonumber
		\end{eqnarray}
		with the boundary condition
		\begin{eqnarray}
		\frac{u(0)}{v(0)}=\cot \theta_j, \nonumber
		\end{eqnarray}	
		for $j=1,\cdots, N.$
	\end{theorem}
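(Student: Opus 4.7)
The plan is to extend the Wigner--von Neumann style construction implicit in Theorem \ref{ae} to $N$ energies simultaneously, by partitioning $[0,\infty)$ into intervals each ``dedicated'' to producing decay of one of the eigenfunctions. The bookkeeping uses the Pr\"ufer representation for \eqref{do}: writing $u_j=R_j\sin\phi_j$, $v_j=R_j\cos\phi_j$ for a solution at energy $\lambda_j$, the Dirac system becomes
\begin{align*}
\phi_j' &= \lambda_j - p\cos(2\phi_j) - q\sin(2\phi_j),\\
(\ln R_j)' &= q\cos(2\phi_j) - p\sin(2\phi_j).
\end{align*}
So $\lambda_j$ is an eigenvalue iff $R_j\in L^2$, and the boundary condition $u_j(0)/v_j(0)=\cot\theta_j$ becomes $\phi_j(0)=\pi/2-\theta_j$, which is simply the initial datum for the target eigenfunction and may be prescribed freely.

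Choose a geometric sequence $x_k=r^k$ with $r>1$ large (depending on $S$), let $I_k=[x_k,x_{k+1}]$, and cycle the assignment by $j(k)\equiv k\pmod N$. On each $I_k$ define
$$p(x)=-\frac{C}{x}\sin\bigl(2\lambda_{j(k)}x+\beta_k\bigr),\qquad q(x)=\frac{C}{x}\cos\bigl(2\lambda_{j(k)}x+\beta_k\bigr),$$
for a constant $C>N/2$ depending on $N$, and glue the pieces by fixed $C^\infty$ bump cutoffs in small neighborhoods of the $x_k$, so that $p,q\in C^\infty$ and $\sqrt{p^2+q^2}=O(1)/(1+x)$. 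Substituting into the amplitude equation yields the compact form
$$(\ln R_j)'(x)=\frac{C}{x}\cos\bigl(2\lambda_{j(k)}x+\beta_k-2\phi_j(x)\bigr).$$
Since $\phi_j(x)=\lambda_jx+\phi_{j,0}(x)$ with $\phi_{j,0}$ slowly varying, the argument is stationary on $I_k$ precisely when $j=j(k)$: choosing $\beta_k$ so that the cosine equals $-1$ at the current phase yields $\int_{I_k}(\ln R_j)'\,dx=-C\log r+o(1)$. For $j\neq j(k)$, the argument has nonzero frequency $2|\lambda_j-\lambda_{j(k)}|$, so one application of integration by parts bounds $\int_{I_k}(\ln R_j)'\,dx$ by $O\bigl(C/(|\lambda_j-\lambda_{j(k)}|\,x_k)\bigr)$, absolutely summable in $k$ by the geometric growth of $x_k$.

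For each fixed $j$, a density-$1/N$ subsequence of the $I_k$'s is resonant, so cumulatively $\log R_j(x)\le -(C/N)\log x+O(1)$ and hence $R_j(x)\lesssim x^{-C/N}$; with $C>N/2$ this yields $R_j\in L^2$ for every $j$. The principal obstacle is the interference control: the potential designed to force decay of $R_{j(k)}$ on $I_k$ must not destabilize the other $R_i$, which is exactly the non-stationary-phase estimate above and relies crucially on the distinctness of $\lambda_1,\ldots,\lambda_N$ and on $x_k$ growing geometrically. The phases $\beta_k$ are chosen inductively in $k$ by tracking the $O(1/x_k)$ deviation of $\phi_{j(k)}$ from its unperturbed value $\lambda_{j(k)}x+(\pi/2-\theta_{j(k)})$ and setting $\beta_k=2\phi_{j(k)}(x_k)+\pi$; the smooth bump cutoffs contribute only absolutely summable corrections to the Pr\"ufer quantities and hence preserve both the $L^2$ conclusion and the $C^\infty$ regularity.
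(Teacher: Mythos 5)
Your construction is essentially the paper's: partition $[0,\infty)$ into blocks, cycle a Wigner--von Neumann resonance through the $N$ energies so that each $R_j$ contracts by a power law on its dedicated blocks while staying under control on the others, the latter via a non-stationary-phase integration by parts. This is precisely what Proposition \ref{ll} packages (its part (2) is your resonant decay, its part (3) and the unnumbered lemma giving \eqref{ss} are your nonresonant bound), after which the paper invokes the step-by-step patching of \cite{JL19,LO17,L19stark}.

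One point worth tightening: on the resonant block your justification that ``the argument is stationary because $\phi_{j,0}$ is slowly varying'' is too weak to give $\int_{I_k}(\ln R_{j(k)})'\,dx=-C\log r$, since $\phi_{j(k),0}$ drifts by $O(C\log r)$ over $I_k=[r^k,r^{k+1}]$, which is not small when $r$ is large. What actually makes it work is that the phase $\psi_k(x)=2\lambda_{j(k)}x+\beta_k-2\phi_{j(k)}(x)$ obeys $\psi_k'=-\tfrac{2C}{x}\sin\psi_k$, which has $\psi_k\equiv\pi$ as a solution; choosing $\beta_k$ so that $\psi_k(x_k)=\pi$ (so $\beta_k\equiv 2\phi_{j(k)}(x_k)-2\lambda_{j(k)}x_k+\pi$ mod $2\pi$, not $2\phi_{j(k)}(x_k)+\pi$ as written) forces $\psi_k\equiv\pi$ on all of $I_k$ by ODE uniqueness, whence $(\ln R_{j(k)})'=-C/x$ exactly. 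This is the uniqueness-theorem step in the paper's Lemma \ref{4.2}; with that replacement your argument is rigorous and identical in substance to the paper's.
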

	
	\begin{theorem}\label{4}
		Let $S=\{\lambda_j\}_{j=1}^{\infty}$ be a set of distinct real numbers. Let   $\{\theta_j\}_{j=1}^{\infty}\subset[0,\pi)$ be a set of angles. If $h(x)$ is a positive function with {$\lim_{x\to\infty}h(x)=\infty$}, then there exist $C^{\infty}$ potentials satisfying
		$$\sqrt{p(x)^2+q(x)^2}\leq \frac{h(x)}{1+x},$$
		such that the associated Dirac operator $L_{p,q}$ has {$L^2[0,\infty)\bigoplus L^2[0,\infty)$} solutions $(u_j,v_j)^T$ satisfying
		\begin{eqnarray}
		L_{p,q}\begin{pmatrix}
		u \\ v
		\end{pmatrix}=\lambda_j
		\begin{pmatrix}
		u \\ v
		\end{pmatrix}\nonumber
		\end{eqnarray}
		with the boundary condition
		\begin{eqnarray}
		\frac{u(0)}{v(0)}=\cot \theta_j, \nonumber
		\end{eqnarray}	
		for $j=1,2, \cdots.$
	\end{theorem}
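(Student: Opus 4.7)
The plan is to extend the construction behind Theorem~\ref{3} to a countable family by building the potential \emph{self-consistently} from the phases of the target eigenfunctions. The assumption $h(x)\to\infty$ is essential: at each $x$ the potential carries contributions from every resonant piece already activated, so $x\sqrt{p^2+q^2}$ must be allowed to grow with $x$.

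In the modified Pr\"ufer coordinates $u=r\cos\theta$, $v=r\sin\theta$, the equation $L_{p,q}(u,v)^T=\lambda(u,v)^T$ reduces to
\begin{equation*}
(\ln r)'=-p\sin(2\theta)-q\cos(2\theta),\qquad\theta'=-\lambda+q\sin(2\theta)-p\cos(2\theta).
\end{equation*}
Pick radii $0<R_1<R_2<\cdots\to\infty$ and $C^\infty$ cutoffs $a_j\geq 0$ supported in $[R_j,\infty)$ with $a_j(x)=K_j/x$ for $x\geq R_j+1$, where $K_j>1/2$. Letting $\theta_j(x)$ denote the Pr\"ufer phase of the sought $\lambda_j$-eigenfunction, with $\theta_j(0)=\theta_j$, define
\begin{equation*}
p(x)=\sum_{j\geq 1}a_j(x)\sin(2\theta_j(x)),\qquad q(x)=\sum_{j\geq 1}a_j(x)\cos(2\theta_j(x)).
\end{equation*}
By $\sin^2+\cos^2=1$ and the angle-difference formulas, the $j$-th term contributes exactly $-a_j(x)$ to $(\ln r_j)'$ and exactly $0$ to $\theta_j'$, while the $k$-th term ($k\neq j$) contributes $-a_k\cos(2(\theta_j-\theta_k))$ and $a_k\sin(2(\theta_j-\theta_k))$ respectively, both oscillating at the non-zero frequency $2|\lambda_j-\lambda_k|$.

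The construction thus reduces to solving a fixed-point equation for the phases: writing $\theta_j(x)=-\lambda_j x+c_j(x)$,
\begin{equation*}
c_j'(x)=\sum_{k\neq j}a_k(x)\sin\bigl(2(\lambda_k-\lambda_j)x+2(c_j(x)-c_k(x))\bigr).
\end{equation*}
One integration by parts on each term of the right-hand side, using $\|a_k'\|_{L^1}\lesssim 1/R_k$, yields a bound $C/(|\lambda_j-\lambda_k|R_k)$ for the $k$-th term's contribution. Choose $R_n$ inductively so that $K_n/(|\lambda_n-\lambda_j|R_n)\leq 2^{-n}$ for every $j<n$; standard iteration then produces phases $c_j(x)\to c_j^\infty\in\R$, and plugging back gives $(\ln r_j)'=-a_j+O(1)$, hence $r_j(x)\sim x^{-K_j}\in L^2[R_1,\infty)$. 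On $[0,R_1]$ each $a_j$ vanishes, so $(u_j,v_j)$ is free and automatically satisfies $u_j(0)/v_j(0)=\cot\theta_j$. At $x\in[R_n,R_{n+1}]$ only the first $n$ terms contribute, so $\sqrt{p^2+q^2}\leq(\sum_{j=1}^n K_j)/x$; choosing $R_n$ large enough that $h(x)\geq\sum_{j=1}^n K_j$ on $[R_n,R_{n+1}]$ (possible since $h\to\infty$) completes the required bound.

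The main obstacle is the simultaneous satisfaction of the fixed-point contraction inequality and the potential-size bound, which requires carefully interleaved choices of $R_n$ and $K_n$. Taking, for instance, $K_j\equiv 1$ and $R_n$ chosen to dominate $2^n\max_{j<n}|\lambda_n-\lambda_j|^{-1}$ and to enforce $h(x)\geq n$ on $[R_n,R_{n+1}]$ closes both. Smoothness of $p$ and $q$ follows from that of the $a_j$ together with the $C^\infty$ regularity of the $\theta_j$ as solutions of smooth ODEs. This is the Dirac analog of the Naboko--Simon dense-point-spectrum method surveyed in \cite{sim18}.
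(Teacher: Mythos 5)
Your construction is correct in outline but takes a genuinely different route from the paper. The paper proves Theorems~\ref{3} and~\ref{4} via the Jitomirskaya--Liu ``piecewise'' strategy: it builds Proposition~\ref{ll}, which produces a \emph{compactly supported} potential block on $[x_0,x_1]$ with $V(x)\approx C(\lambda,S)/(x-b)$ and $\varphi(x)=-2\lambda(x-a)+2\varphi_0$, engineered so that the Pr\"ufer amplitude of the one targeted eigenvalue $\lambda$ decays like $((x_1-b)/(x_0-b))^{-100}$ while the amplitudes of the finitely many other $\lambda_j$ in $S$ grow by a factor at most $2$. These blocks are then concatenated, rotating through the eigenvalues, with $N_r$ chosen to increase slowly so that the size constants $C(\lambda,S)$ stay below $h(x)/(1+x)$. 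Your approach is instead the Naboko--Simon self-consistent sum: $p=\sum_j a_j\sin 2\theta_j$, $q=\sum_j a_j\cos 2\theta_j$, where each phase $\theta_j$ ``sees'' exactly the term $-a_j$ in $(\ln r_j)'$ and zero in $\theta_j'$, with all cross-terms nonresonant. This is a legitimate and classical alternative (the paper cites \cite{Na86,simondense} for exactly this idea), and it has the advantage of producing one globally defined smooth potential with no block structure and of setting all boundary conditions at $x=0$ simultaneously, while the paper's route has the advantage that each block estimate (Proposition~\ref{ll}) involves only a finite set $S$ of eigenvalues and is therefore easier to make rigorous.

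Two points in your sketch need tightening. First, the ``fixed point'' language is more than is needed: since each $a_k$ is supported in $[R_k,\infty)$ with $R_k\to\infty$, the coupled system $c_j'=\sum_{k\neq j}a_k\sin\bigl(2(\lambda_k-\lambda_j)x+2(c_j-c_k)\bigr)$ is at every $x$ a \emph{finite} system of smooth ODEs, so the $c_j$ exist and are $C^\infty$ by ordinary Picard theory --- no iteration scheme is required. Second, and more importantly, your integration by parts tacitly requires the effective frequency $2(\lambda_k-\lambda_j)+2(c_j'-c_k')$ to stay bounded away from zero; since $|c_j'|\le\sum_l a_l(x)\lesssim n/R_n$ on $[R_n,R_{n+1}]$ and the distinct $\lambda_j$ may accumulate, you must also impose $n/R_n<\tfrac14\min_{j\neq k\le n}|\lambda_j-\lambda_k|$ in the inductive choice of $R_n$, alongside the $2^{-n}$ smallness you stated and the requirement $h\ge n$ on $[R_n,\infty)$. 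All three are of the form ``take $R_n$ large,'' so they are compatible, but the second one is not in your write-up and is precisely what makes the stationary-phase bound uniform. Finally, the displayed claim ``$(\ln r_j)'=-a_j+O(1)$'' should read that the \emph{integral} of the cross-terms is $O_j(1)$ uniformly in $x$, giving $\ln r_j(x)=-K_j\ln x+O_j(1)$ and hence $r_j\in L^2$ when $K_j>\tfrac12$.
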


	%	This paper is  motivated by the study of Schr\"odinger operators. We first give a brief discussion on the history of problems of one embedded eigenvalue. For Schr\"odinger equation with the form
	%	\begin{align}
	%		Hu=-u^{\prime\prime}+Vu=\lambda u.\nonumber
	%	\end{align}
	%	{Kato's classical results \cite{kato1959} show that if $\limsup_{x\to \infty}\abs{xV(x)}=A$, then there are  no eigenvalues larger than $A^2$. Wigner and von Neumann's examples  \cite{WignervonNeumantype} imply that there exist potentials with $\limsup_{x\to \infty}\abs{xV(x)}=8$, such that $\lambda=1$ is an eigenvalue of the associated Schr\"odinger operator. Finally, (see the survey \cite{sim18} for the history), Atkinson and Everitt \cite{atkinson1978bounds} obtained the sharp bound $\frac{4A^2}{\pi^2}$. They proved that there are no eigenvalues larger than $\frac{4A^2}{\pi^2}$, and for any $0<\lambda<\frac{4A^2}{\pi^2}$, there are potentials with 
	%	$\limsup_{x\to \infty}\abs{xV(x)}=A$ so that $\lambda$ is an eigenvalue of the associated Schr\"odinger operator.}

	For Dirac operators with single embedded eigenvalue, Evans and Harris \cite{Evans1981bound} obtained the sharp bound for the separated Dirac equation with the form 
	\begin{align}
	\tilde{L} \begin{pmatrix}
	u \\ v
	\end{pmatrix}=
	\begin{pmatrix}
	0 & -1 \\ 1 & 0
	\end{pmatrix}\frac{d}{dt}
	\begin{pmatrix}
	u \\ v
	\end{pmatrix}+\begin{pmatrix}
	p+1 & q \\ q & p-1
	\end{pmatrix}\begin{pmatrix}
	u \\ v
	\end{pmatrix}=\lambda\begin{pmatrix}
	u \\ v
	\end{pmatrix},\nonumber
	\end{align}
	where their results are under the assumption that $q$ is locally absolutely continuous.
	For more results on embedded single eigenvalue, one can refer to \cite{L17,L18dis}.
	
	For many embedded eigenvalues of Schr\"odinger operators or Dirac operators, 
	Naboko \cite{Na86} constructed smooth potentials such that $L_{0,q}$ has dense (rationally independent) embedded eigenvalues. Naboko's constructions work for Schr\"odinger operators as well. Simon \cite{simondense} constructed potentials such that the associated Schr\"odinger operator has dense embedded eigenvalues. More recently, 
	Jitomirskaya and Liu \cite{JL19} introduced a novel idea to construct embedded eigenvalues for Laplacian on manifolds, which is referred to as piecewise constructions. This approach turns out to be quite robust. 
	Liu and his collaborators developed the approach of piecewise constructions to construct embedded eigenvalues for various models \cite{L18dis,Liu2022resonant,LO17,L19stark}. For more results on embedded eigenvalue problems, one can refer to \cite{Schmidtdensedirac,liumana,liupafa}.

	In this paper, we adapt the approach of piecewise construction to study embedded eigenvalue problems of Dirac operators. The main strategy of proofs for our main theorems follow from that of \cite{JL19,LO17,L19stark}. In the current case of Dirac operators, 
	new difficulties and challenges arise from 
	the Dirac operator being vector valued and its potential consisting of a pair of functions $p$ and $q$ (unlike the models in \cite{JL19,L18dis,Liu2022resonant,LO17,L19stark}).

	\section{Proof of Theorems \ref{ne} and \ref{ae}}
	
	%	In this section, we prove Theorems \ref{ne} and \ref{ae}.
	
	Let $(u(x), v(x))^T$ be a solution of 
	\begin{equation} L_{p,q} \begin{pmatrix}
	u \\ v
	\end{pmatrix}
	=\lambda\begin{pmatrix}
	u \\ v
	\end{pmatrix}.\nonumber
	\end{equation}
	We define the Pr\"ufer variables  $R(x)$ and $\theta(x)$ of  $\lambda$ by
	\begin{eqnarray}
	u(x)=R(x)\cos \theta(x),\nonumber
	\end{eqnarray}
	and \begin{eqnarray}
	v(x)=R(x)\sin\theta(x).\nonumber
	\end{eqnarray}
	Clearly, we have
	\begin{proposition}\label{prop1}
		Let $R(x)$ and $\theta(x)$ be the Pr\"ufer variables of $\lambda$. Then $\lambda$ is an eigenvalue of the Dirac operator {if and only if} $R\in L^2(0,\infty).$
	\end{proposition}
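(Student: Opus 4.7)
The plan is to read the proposition directly off the Pythagorean identity applied to the Prüfer substitution. Setting $u(x)=R(x)\cos\theta(x)$ and $v(x)=R(x)\sin\theta(x)$, I would first observe pointwise that
\[
\abs{u(x)}^2+\abs{v(x)}^2=R(x)^2\bigl(\cos^2\theta(x)+\sin^2\theta(x)\bigr)=R(x)^2
\]
for every $x\ge 0$, whence
\[
\int_0^\infty \bigl(\abs{u(x)}^2+\abs{v(x)}^2\bigr)\dd x=\int_0^\infty R(x)^2\dd x.
\]
So the solution $(u,v)^T$ belongs to $L^2[0,\infty)\oplus L^2[0,\infty)$ if and only if $R\in L^2(0,\infty)$.

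Next, I would combine this with the definition of an eigenvalue. Since $(u,v)^T$ is by hypothesis already a solution of $L_{p,q}(u,v)^T=\lambda(u,v)^T$, and the solution used to define the Prüfer variables is the one satisfying the boundary condition \eqref{boundarycondition} at the origin (which amounts to fixing $\theta(0)$ so that $\tan\theta(0)=\tan\phi_0$), the only remaining condition for $\lambda$ to be an eigenvalue is that the eigenfunction lie in $L^2\oplus L^2$. By the identity just established, this is equivalent to $R\in L^2(0,\infty)$, which gives both directions of the claimed equivalence.

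There is no genuine obstacle here: the proposition is essentially a restatement of the definition of an eigenvalue in polar coordinates. Its role is strategic rather than technical, since it allows the subsequent analysis of embedded eigenvalues to proceed entirely through the amplitude $R$ and the phase $\theta$, which satisfy a first-order Prüfer system amenable to asymptotic analysis---this is how Theorems \ref{ne} and \ref{ae} will be reduced to quantitative estimates on $R(x)$.
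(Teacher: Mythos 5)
Your argument is correct and is exactly the one-line justification the paper suppresses behind the word ``Clearly'': the Pythagorean identity $\abs{u}^2+\abs{v}^2=R^2$ reduces square-integrability of the solution to square-integrability of $R$, and the boundary condition is absorbed into the choice of $\theta(0)$. The paper offers no written proof for this proposition, and your reasoning coincides with the intended (and essentially only) one.
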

	By the equation \begin{equation} L_{p,q}  \begin{pmatrix}
	u \\ v
	\end{pmatrix}
	=\lambda\begin{pmatrix}
	u \\ v
	\end{pmatrix},\nonumber
	\end{equation} we obtain
	\begin{eqnarray}\label{r}
	\frac{R^{\prime}}{R}=-q(x)\cos2\theta(x)-p(x)\sin2\theta(x),
	\end{eqnarray}and
	\begin{eqnarray}\label{t}
	\theta^{\prime}=-\lambda+q(x)\sin2\theta(x)-p(x)\cos2\theta(x).
	\end{eqnarray}
	Set $q(x)=V(x)\cos \varphi(x), p(x)=V(x)\sin \varphi (x)$. Note that $p$ and $q$ are completely determined by $V$ and $\varphi$. By \eqref{r} and \eqref{t}, one has
	\begin{eqnarray}\label{crp}
	\frac{R^{\prime}}{R}=-V(x)\cos (2\theta(x)-\varphi(x)),
	\end{eqnarray}
	and 
	\begin{eqnarray}\label{ctp}
	\theta^{\prime}=-\lambda+V(x)\sin (2\theta(x)-\varphi(x)).
	\end{eqnarray}
	{It is obvious that equations \eqref{crp} and \eqref{ctp} are equivalent to \begin{equation} L_{p,q}  \begin{pmatrix}
		u \\ v
		\end{pmatrix}
		=\lambda\begin{pmatrix}
		u \\ v
		\end{pmatrix}.\nonumber
		\end{equation} By Proposition \ref{prop1}, we only need to study \eqref{crp} and \eqref{ctp}.}
	
	\begin{proof}[\textbf{Proof of Theorem \ref{ne}}]Assume 
		\begin{eqnarray}\label{xvx}
		\limsup_{x\to \infty}|xV(x)|=\limsup_{x\to \infty}x\sqrt{p(x)^2+q(x)^2}=A<\frac{1}{2}.
		\end{eqnarray}
		For any $\epsilon>0$ (small enough so that $A+\epsilon<\frac{1}{2}$), there exists $x_0$ so that for any $x>x_0$, one has
		$$ |V(x)|\leq\frac{A+\epsilon}{1+x}.$$
		By \eqref{crp} and \eqref{xvx}, we have
		\begin{align}
		\ln R(x)&=\ln R(x_0)-\int_{x_0}^xV(t)\cos(2\theta(t)-\varphi(t))dt\nonumber\\
		&\geq O(1)-(A+\epsilon)\int_{x_0}^x\frac{1}{1+t}dt\nonumber\\
		&= O(1)-(A+\epsilon)\ln x.\nonumber
		\end{align}
		By the assumption, there exists a positive constant $k$ such that, for large $x$, we have
		$$R(x)\geq kx^{-\frac{1}{2}}.$$
		This implies that $R\notin L^2(0,\infty)$. Hence by Proposition \ref{prop1}, $\lambda$ is not an eigenvalue of $L_{p,q}$. 
	\end{proof}
	
	\begin{proof}[\textbf{Proof of Theorem \ref{ae}} for $A>\frac{1}{2}$]
		We construct $p$ and $q$ as follows:
		\begin{eqnarray}\label{vvv}
		V(x)=\frac{A}{1+x}, \ x\geq0,\nonumber
		\end{eqnarray}
		and 
		\begin{align}
		\varphi(x)=-2\lambda x+2\theta(0),\ x\geq 0.\nonumber
		\end{align}
		By \eqref{ctp} and the uniqueness theorem (see for example \cite[Theorem 2.2]{Teschl2012ODEbook}),  one has for any $x\geq0,$
		$$2\theta(x)-\varphi(x)\equiv 0.$$
		Thus from \eqref{crp} we obtain
		\begin{align}
		\ln R(x)&=\ln R(0)- \int_{0}^x\frac{A}{1+t}dt\nonumber\\
		&=O(1)-A\ln x.\nonumber
		\end{align}
		We immediately obtain that for some small $\epsilon>0$ and any large $x$, 
		$$R(x)\leq x^{-\frac{1}{2}-\epsilon}.$$
		Therefore, $R\in L^2(0,\infty)$ and by Proposition \ref{prop1}, $\lambda$ is an eigenvalue of the corresponding Dirac operator $L_{p,q}$. 
	\end{proof}
	
	\begin{proof}[\textbf{Proof of Theorem \ref{ae}} for $A=\frac{1}{2}$]
		
		Let $\epsilon_n=\frac{1}{2n}$, $a_n=e^{n^3}$. Set
		\begin{align}
		V(x)=\frac{A+\epsilon_n}{x},  \ x\in [a_n, a_{n+1}),\nonumber
		\end{align}
		and 
		\begin{align}
		\varphi(x)=-2\lambda x+2\theta(0).\nonumber
		\end{align}
		By \eqref{ctp} and the uniqueness theorem,  one has for any $x\geq 0$, $$2\theta(x)-\varphi(x)\equiv 0.$$
		
		By \eqref{crp}, one has
		\begin{align}\label{anan}
		\ln R(a_{n+1})-\ln R(a_n)&=-\int_{a_n}^{a_{n+1}}\frac{A+\epsilon_n}{x} dx\nonumber\\
		&=-(A+\epsilon_n)\ln \frac{a_{n+1}}{a_n}.
		\end{align}
		For $t\in [a_n, a_{n+1})$, we have
		\begin{align}\label{tan}
		\ln R(t)-\ln R(a_n)&=-\int_{a_n}^{t}\frac{A+\epsilon_n}{x} dx\nonumber\\
		&=-(A+\epsilon_n)\ln \frac{t}{a_n}.
		\end{align}	
		From \eqref{anan}, we obtain
		\begin{eqnarray}
		\ln R(a_n)=\ln R(a_0)-\sum_{j=0}^{n-1}(A+\epsilon_j)\ln \frac{a_{j+1}}{a_j}\nonumber.
		\end{eqnarray}
		Therefore, one has
		\begin{align}\label{ran}
		R(a_n)&=O(1)e^{-\sum_{j=0}^{n-1}(A+\epsilon_j)\ln \frac{a_{j+1}}{a_j}}\nonumber\\
		&=O(1)\prod_{j=0}^{n-1}a_{j+1}^{-(A+\epsilon_j)}a_j^{A+\epsilon_j}\nonumber\\
		&=O(1)\prod_{j=1}^{n}a_{j}^{-(A+\epsilon_{j-1})}\prod_{j=1}^{n-1}a_j^{A+\epsilon_j}\nonumber\\
		&=O(1)a_{n}^{-(A+\epsilon_{n-1})}\prod_{j=1}^{n-1}a_j^{\epsilon_j-\epsilon_{j-1}}.
		\end{align}
		By \eqref{tan} and \eqref{ran}, we conclude
		\begin{align}\label{Rt}
		R(t)&=O(1) R(a_n)e^{-(A+\epsilon_n)\ln \frac{t}{a_n}}\nonumber\\
		&=O(1) R(a_n)t^{-(A+\epsilon_n)}a_n^{A+\epsilon_n}\nonumber\\
		&=O(1)\prod_{j=1}^{n}a_j^{\epsilon_j-\epsilon_{j-1}}t^{-(A+\epsilon_n)}.
		\end{align}
		It follows that
		\begin{align}
		\int_{a_n}^{a_{n+1}}R(t)^2dt&=O(1)\int_{a_n}^{a_{n+1}}\prod_{j=1}^{n}a_j^{\frac{1}{j}-\frac{1}{j-1}}t^{-1-\frac{1}{n}}dt\nonumber\\
		&\leq O(1)\prod_{j=1}^{n}e^{-j}\frac{n}{e^{n^2}}\nonumber\\
		&\leq O(1)\frac{n}{e^{n^2}}.
		\end{align}
		This implies that $R\in L^2(0,\infty)$,  by Proposition \ref{prop1}, $\lambda$ is an eigenvalue of the corresponding Dirac operator $L_{p,q}$.  
	\end{proof}	
	
	\section{Proof of Theorems \ref{3} and \ref{4}}
	
	%	In this section, we prove Theorems \ref{3} and \ref{4}. 
	
	We assume that $\lambda$ and $\lambda_j$ are different values. Denote the Pr\"ufer variables of $\lambda$ and $\lambda_j$ by $R(x),\theta(x)$ and $R_j(x),\theta_j(x)$, respectively. 
	
	Recall that $V(x)$ and $\varphi(x)$ uniquely determine $p$ and $q$. Define  $V(x)=V(x,b)$ and $\varphi(x)=\varphi(x,\lambda,a, \varphi_0)$ on $[a,\infty)$ by
	\begin{equation}\label{p}
	V(x,b)=\frac{C}{1+x-b},
	\end{equation}
	and
	\begin{align}\label{phix}
	\varphi(x,\lambda, a, \theta_0)=-2\lambda(x-a)+2\varphi_0,
	\end{align}
	where $C$ is a constant will be defined later, $a>b$ and $\varphi_0=\theta(a).$
	
	%Solving the differential equation
	%	\begin{equation}
	%		\theta^{\prime}(x)=-\lambda +\frac{C}{1+x-b}\sin(2\theta(x)-\varphi(x))\nonumber
	%	\end{equation}
	%	on $[a, \infty)$ with the initial condition $\theta(a)=\theta_0$ and 
	%	\begin{align}\label{phix}
	%		\varphi(x)=-2\lambda(x-a)+2\theta_0,
	%	\end{align} where $C$ is a constant will be defined later, $a>b. $  We can get a solution $\theta(x)=\theta(x,\lambda, a, b, \theta_0)=-\lambda(x-a)+\theta_0$. For convenience, we might write $\theta(x)$  instead of $\theta(x,\lambda, a, b, \theta_0)$ in the following. Set
	%	\begin{equation}\label{p}
	%		V(x,\lambda, a, b, \theta_0)=\frac{C}{1+x-b}.
	%	\end{equation}
	\begin{lemma}
		Fix $b>0$. Let  $V(x)$ be defined by \eqref{p}. Let $\varphi(x)$ be defined by \eqref{phix}, and $\lambda\neq \lambda_j.$ Let $\theta_j(x)$ be a solution of 
		\begin{equation}\label{tj}
		\theta_j^{\prime}(x)=-\lambda_j+V(x)\sin(2\theta_j(x)-\varphi(x)),
		\end{equation}
		then we have
		\begin{align}\label{ss}
		\int_{x_0}^x\frac{1}{1+t-b}\cos(2\theta_j(t)-\varphi(t))dt=\frac{O(1)}{x_0-b},
		\end{align}
		for any $x>x_0>a$.
	\end{lemma}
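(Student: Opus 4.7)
The plan is to linearize the oscillating phase. I would introduce $\psi(t) := 2\theta_j(t) - \varphi(t)$. Using \eqref{tj} together with $\varphi'(t) \equiv -2\lambda$, this yields the key identity
\begin{equation*}
\psi'(t) = 2(\lambda - \lambda_j) + 2V(t)\sin\psi(t).
\end{equation*}
The non-resonance hypothesis $\lambda \neq \lambda_j$, combined with $V(t) = C/(1+t-b) \to 0$, forces $|\psi'(t)| \geq c > 0$ for all $t$ beyond some threshold $T \geq a$ (specifically one can take $c = |\lambda - \lambda_j|$ once $2V(t) \leq |\lambda - \lambda_j|$). Thus on $[T,\infty)$ the integrand $\cos\psi(t)/(1+t-b)$ has a non-stationary phase, which is what lets us extract a faster-than-$1/t$ decay from an oscillatory integral whose amplitude only decays like $1/t$.

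The main step is integration by parts via $\cos\psi = (\sin\psi)'/\psi'$:
\begin{equation*}
\int_{x_0}^x \frac{\cos\psi(t)}{1+t-b}\,dt = \left[\frac{\sin\psi(t)}{\psi'(t)(1+t-b)}\right]_{x_0}^x - \int_{x_0}^x \sin\psi(t)\,\frac{d}{dt}\!\left[\frac{1}{\psi'(t)(1+t-b)}\right]dt.
\end{equation*}
The boundary term is immediately $O(1/(x_0-b))$ using the lower bound on $|\psi'|$. Expanding the derivative produces two contributions: one proportional to $1/(\psi'(1+t-b)^2)$, and one proportional to $\psi''/((\psi')^2(1+t-b))$. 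Differentiating the ODE for $\psi$ gives $\psi''(t) = 2V'(t)\sin\psi(t) + 2V(t)\psi'(t)\cos\psi(t)$, and since $|V(t)| = O(1/(1+t-b))$ and $|V'(t)| = O(1/(1+t-b)^2)$, one gets $|\psi''(t)| = O(1/(1+t-b))$. Hence both contributions in the remainder are pointwise $O(1/(1+t-b)^2)$, and their integral from $x_0$ to $x$ is $O(1/(x_0-b))$.

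The residual range $x_0 \in (a, T]$ where the lower bound on $|\psi'|$ may fail is handled by a triviality: on this compact region the full integral is bounded by a constant depending only on $T, a, b, C$, and $1/(x_0-b) \geq 1/(T-b)$, so $O(1/(x_0-b))$ holds tautologically by adjusting the implicit constant. The only real obstacle is extracting the uniform positive lower bound on $|\psi'|$; this is the sole use of $\lambda \neq \lambda_j$, and the entire estimate is essentially a quantitative Riemann–Lebesgue/van der Corput argument adapted to the Prüfer equation.
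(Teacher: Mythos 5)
Your proof is correct and follows essentially the same route as the paper: set $\psi = 2\theta_j - \varphi$, observe that $\psi' = 2(\lambda - \lambda_j) + 2V\sin\psi$ is bounded away from zero once $V$ is small, integrate by parts via $\cos\psi = (\sin\psi)'/\psi'$, and estimate the boundary term and remainder (using $|\psi''| = O(1/(1+t-b))$) by $O(1/(x_0-b))$. The paper carries out exactly this computation, merely writing $\psi'(t)$ as $2(\lambda - \lambda_j) + O(1)/(1+t-b)$ and folding both remainder terms into a single $O(1)\int (1+t-b)^{-2}\,dt$; it does not address the residual range you discuss, tacitly relying (consistently with how the lemma is invoked in Proposition~\ref{ll}, which requires $x_0 - b > K(\lambda,S)$) on $x_0 - b$ being large enough that the phase is non-stationary throughout. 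Your explicit handling of that range is a harmless extra; it is only worth noting that the relevant threshold is really a lower bound on $t-b$ rather than on $t$, so the implicit constant does not in fact depend on $b$.
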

	
	\begin{proof}[\textbf{Proof}]
		By \eqref{phix} and \eqref{tj} we have
		\begin{align}
		2\theta_j^{\prime}(t)-\varphi^{\prime}(t)=2(\lambda-\lambda_j)+\frac{O(1)}{1+t-b},\nonumber
		\end{align}
		and 
		\begin{align}
		2\theta_j^{\prime\prime}(t)-\varphi^{\prime\prime}(t)=\frac{O(1)}{1+t-b}.\nonumber
		\end{align}
		It follows that
		\begin{align}
		&\int_{x_0}^x\frac{1}{1+t-b}\cos(2\theta_j(t)-\varphi(t))dt\nonumber\\
		=&\frac{\sin(2\theta_j(t)-\varphi(t))}{2(\lambda-\lambda_j)+\frac{O(1)}{1+t-b}}\frac{1}{1+t-b}\Bigg|^x_{x_0}+O(1)\int_{x_0}^x\frac{1}{(1+t-b)^2}dt\nonumber\\
		=&\frac{O(1)}{x_0-b}.\nonumber
		\end{align}
	\end{proof}

	\begin{lemma}\label{4.2}
		Fix $b>0$. Let $V(x)$ be defined by \eqref{p} on $[a, \infty)$. Let $\varphi(x)$ be defined by \eqref{phix} on $[a, \infty)$, and $\lambda \neq\lambda_j.$ Let $R(x),\theta(x)$ and $ R_j(x),\theta_j(x)$ be the Pr\"ufer variables of $\lambda$ and $\lambda_j$, respectively. For any $x>a$, 
		\begin{eqnarray}\label{lr}
		\ln R(x)-\ln R(a)\leq -100\ln\frac{x-b}{a-b}+C,
		\end{eqnarray}
		\begin{eqnarray}\label{rxa}
		\ln R(x)\leq {\ln R(a)},
		\end{eqnarray}
		{where $C$ is a large constant depending on $\lambda$ and $\lambda_j$,} and for any $ x>x_0 \geq a$ with large enough $x_0-b$, we have
		\begin{eqnarray}\label{rej}
		R_j(x)\leq 1.5R_j(x_0).
		\end{eqnarray}
	\end{lemma}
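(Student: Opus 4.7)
The plan is to exploit the fact that the choice of $\varphi$ in \eqref{phix} is rigged so that the Pr\"ufer angle for the target value $\lambda$ is an explicit linear function, while for every other $\lambda_j$ the phase $2\theta_j-\varphi$ continues to rotate at a nonzero linear rate, producing cancellation in the Pr\"ufer radial integral.

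For \eqref{lr} and \eqref{rxa}, I would first check directly that $\theta(x)=-\lambda(x-a)+\theta(a)$ satisfies \eqref{ctp} on $[a,\infty)$: with this ansatz $2\theta(x)-\varphi(x)\equiv 0$ and $\theta'=-\lambda+V(x)\sin 0=-\lambda$, with the correct initial value $\theta(a)$. Uniqueness of solutions of the Pr\"ufer angle ODE then forces $2\theta(x)-\varphi(x)\equiv 0$ on $[a,\infty)$. Substituting into \eqref{crp} yields $R'/R=-V(x)=-C/(1+x-b)\le 0$, and integrating from $a$ to $x$ gives
\begin{equation*}
\ln R(x)-\ln R(a)=-C\ln\frac{1+x-b}{1+a-b}=-C\ln\frac{x-b}{a-b}+O(1),
\end{equation*}
uniformly in $x>a$, because $\ln(1+1/(x-b))$ and $\ln(1+1/(a-b))$ are both bounded. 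Taking the coefficient $C$ in \eqref{p} at least $101$ (it is a free design parameter of the construction) then gives \eqref{lr}, with an additive bookkeeping constant depending on $\lambda$ and $\lambda_j$. The same sign information $R'\le 0$ yields \eqref{rxa} immediately.

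For \eqref{rej} I would apply \eqref{crp} with $\lambda_j,\theta_j,R_j$ in place of $\lambda,\theta,R$, and then invoke the preceding lemma. Since $\lambda\neq\lambda_j$, the phase $2\theta_j-\varphi$ drifts linearly, and the integration-by-parts estimate \eqref{ss} yields
\begin{equation*}
\ln R_j(x)-\ln R_j(x_0)=-\int_{x_0}^x V(t)\cos(2\theta_j(t)-\varphi(t))\,dt=\frac{O(1)}{x_0-b}.
\end{equation*}
For $x_0-b$ sufficiently large the right-hand side is in absolute value at most $\ln 1.5$, whence $R_j(x)\le 1.5\,R_j(x_0)$.

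None of the three estimates is computationally deep; the only mild annoyance is that the letter $C$ is overloaded, serving both as the fixed coefficient of the Coulomb-type $V$ in \eqref{p} (which must be chosen at least around $100$) and as the additive constant on the right-hand side of \eqref{lr} that absorbs the lower-order logarithmic remainders. Conceptually, the lemma packages exactly what the piecewise construction requires: on any one Coulomb segment the amplitude $R$ at the targeted $\lambda$ decays polynomially with a designer-controlled rate, while the amplitudes $R_j$ at all previously targeted eigenvalues are perturbed by at most a multiplicative factor close to $1$ over the entire tail of the segment.
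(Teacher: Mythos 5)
Your proof is correct and follows the same route as the paper's: establish $2\theta-\varphi\equiv 0$ by uniqueness, integrate $R'/R=-V$ explicitly to get logarithmic decay of $R$ (taking the Coulomb coefficient in \eqref{p} sufficiently large, at least $100$), and invoke the previous integration-by-parts lemma \eqref{ss} to show $\ln R_j$ changes by $O(1)/(x_0-b)$. You also correctly flagged the overloading of the symbol $C$, which matches the paper's (implicit) convention that the coefficient in \eqref{p} is later specified as a large constant $C(\lambda,S)$.
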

	
	\begin{proof}[\textbf{Proof}]
		By \eqref{ctp}, \eqref{p} and \eqref{phix}, and the uniqueness theorem, one has
		\begin{align}
		2\theta(x)-\varphi(x)=0.\nonumber
		\end{align}
		Therefore, by \eqref{crp} and \eqref{p}, we have
		\begin{align}
		\ln R(x)&=\ln R(a) - \int_{a}^{x}\frac{C}{1+t-b}dt \nonumber\\
		&=\ln R(a)-C\ln\frac{1+x-b}{1+a-b}.\nonumber
		\end{align}
		Then we immediately obtain \eqref{lr} and \eqref{rxa}.
		
		By \eqref{r} and \eqref{ss}, we have
		\begin{align}
		\ln R_j(x)&=\ln R_j(x_0)-\int_{x_0}^{x}\frac{C\cos(2\theta_j(t)-\varphi(t))}{1+t-b}dt\nonumber\\
		&=\ln R_j(x_0)+\frac{O(1)}{x_0-b}.\nonumber 
		\end{align}
		Hence we obtain \eqref{rej}.
	\end{proof}

	\begin{proposition}\label{ll}
		Let  $\lambda$ and $ S=\{{\lambda}_j\}_{j=1}^k$ be distinct real numbers. 
		Given  $\varphi_0\in[0,\pi)$, if $x_1>x_0>b$, then
		there exist constants $K(\lambda, S)$, $C(\lambda, S)$ (independent of $b, x_0$ and $x_1$) and $ \widetilde V(x,\lambda,S,x_0,x_1,b)\in C^{\infty}$ and $\varphi(x,\lambda,S,x_0,x_1,b,\varphi_0)$
		such that  for $x_0-b>K(\lambda,S)$ the following holds:
		
		\begin{description}
			\item[(1)]   for $x_0\leq x \leq x_1$, ${\rm supp}(\widetilde V)\subset(x_0,x_1)$,  and
			\begin{equation}\label{thm141}
			|\widetilde V(x,\lambda,S,x_0,x_1,b)|\leq \frac{C(\lambda, S)}{x-b}.
			\end{equation}
			
			\item[(2)] the solution of Dirac  equation 
			\begin{equation} L_{p,q} \begin{pmatrix}
			u \\ v
			\end{pmatrix}
			=\lambda\begin{pmatrix}
			u \\ v
			\end{pmatrix},\nonumber
			\end{equation}	
			with the boundary condition $\frac{u(x_0)}{v(x_0)}=\cot\varphi_0$ satisfies
			\begin{equation}\label{thm142}
			R(x_1)\leq C(\lambda, S)\left(\frac{x_1-b}{x_0-b}\right)^{-100} R(x_0),
			\end{equation}
			and  for $x_0<x<x_1$,
			\begin{equation}\label{thm143}
			R(x)\leq  2 R(x_0).
			\end{equation}
			\item[(3)] the solution of Dirac equation
			\begin{equation} L_{p,q} \begin{pmatrix}
			u \\ v
			\end{pmatrix}
			=\lambda_j \begin{pmatrix}
			u \\ v
			\end{pmatrix},\nonumber
			\end{equation}	  with any boundary condition  satisfies for
			$x_0<x\leq x_1$,
			\begin{equation}\label{thm144}
			R_j(x)\leq  2 R_j(x_0).
			\end{equation}
		\end{description}
	\end{proposition}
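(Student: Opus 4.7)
The plan is to adapt the construction underlying Lemma~\ref{4.2} to a compact interval by smoothly cutting off at the endpoints. First I would fix $C = 100$ in formula \eqref{p} and set $V^*(x) = 100/(1+x-b)$ on $[x_0, x_1]$. Next I would pick $\chi \in C_c^\infty((x_0, x_1))$ with $0 \leq \chi \leq 1$ and $\chi \equiv 1$ on $[x_0+1, x_1-1]$, and define $\widetilde V = \chi V^*$ (in the degenerate regime $x_1 - x_0 \leq 3$, just take $\widetilde V \equiv 0$, which makes all three conclusions trivial once $K(\lambda, S)$ is large). The phase is $\varphi(x) = -2\lambda(x - x_0) + 2\varphi_0$, exactly as in \eqref{phix} with $a = x_0$. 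The bound \eqref{thm141} is then built in, since $|\widetilde V| \leq V^* \leq 100/(x-b)$.

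For the target eigenvalue $\lambda$, I would redo the uniqueness argument that opens the proof of Lemma~\ref{4.2}, now for the cut-off potential. The key observation is that $f(x) := 2\theta(x) - \varphi(x)$ satisfies $f' = 2\widetilde V \sin f$ for \emph{any} choice of $\widetilde V$, so the cut-off cannot interfere with this step. The boundary condition $u(x_0)/v(x_0) = \cot\varphi_0$ forces $2\theta(x_0) \equiv 2\varphi_0 \pmod{2\pi}$, i.e.\ $f(x_0) \in 2\pi\mathbb{Z}$, and ODE uniqueness then gives $\sin f \equiv 0$, $\cos f \equiv 1$ on $[x_0, x_1]$. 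Equation \eqref{crp} collapses to $R'/R = -\widetilde V$, producing $R(x) = R(x_0)\exp\bigl(-\int_{x_0}^x \widetilde V\bigr)$. Since $\widetilde V \geq 0$, the function $R$ is non-increasing, which gives \eqref{thm143} with room to spare. At $x = x_1$, the cut-off loses at most $O(1)$ in $\int_{x_0}^{x_1} \widetilde V$ because each of the two edge intervals has length~$1$ and $V^* \leq 100/(x_0-b)$ there; thus $\int_{x_0}^{x_1}\widetilde V \geq 100\ln\tfrac{1+x_1-b}{1+x_0-b} - O(1)$, which yields \eqref{thm142} after absorbing the constant into $C(\lambda, S)$.

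For each $\lambda_j \in S$, I would control the log-ratio $\ln R_j(x) - \ln R_j(x_0) = -\int_{x_0}^x \widetilde V(t)\cos(2\theta_j(t) - \varphi(t))\,dt$ by splitting into bulk and edges. On the bulk $[x_0+1, \min(x, x_1-1)]$ the potential $\widetilde V$ equals $V^*$ of the form \eqref{p}, so the oscillatory integral estimate \eqref{ss} applies on this subinterval (with the same $\theta_j$, $\varphi$, and $b$) and produces a bound $O(1)/(x_0-b)$ whose implicit constant depends on $|\lambda - \lambda_j|^{-1}$. On each edge of length at most~$1$, the direct estimate $|\widetilde V| \leq 100/(x_0-b)$ also yields an $O(1)/(x_0-b)$ contribution. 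Summing over the finitely many $\lambda_j$ and taking $K(\lambda, S)$ large enough that all these constants are majorized by $\ln 2$, I obtain $R_j(x) \leq 2R_j(x_0)$, which is \eqref{thm144}.

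The principal technical point will be bookkeeping the cut-off errors so they remain $O(1)$ in the $\ln R$ estimate for $\lambda$ and $O(1)/(x_0-b)$ in the $\ln R_j$ estimates. This is precisely what forces the lower bound $x_0 - b > K(\lambda, S)$, with $K$ depending on $\min_j |\lambda - \lambda_j|^{-1}$ through the implicit constant in the integration-by-parts identity \eqref{ss}.
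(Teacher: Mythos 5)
Your proposal is correct and follows essentially the same route as the paper: the same potential shape $V^*(x)=C/(1+x-b)$ with the phase $\varphi$ locked to $-2\lambda(x-x_0)+2\varphi_0$, combined with a smooth cutoff to meet the support condition, and then Lemma~\ref{4.2} (via the oscillatory integral bound \eqref{ss}) to control $R$ and $R_j$. Where the paper compresses the cutoff step into ``a small perturbation of $V$ gives a small change of $R$,'' you make it rigorous by observing that $f=2\theta-\varphi$ satisfies $f'=2\widetilde V\sin f$ for \emph{any} $\widetilde V$, so $f\equiv 0$ survives the cutoff exactly, and by splitting the $\lambda_j$ integral into bulk (where \eqref{ss} applies verbatim) and $O(1)/(x_0-b)$-size edges; this is a tidier justification of the same idea.
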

	\begin{proof}[\textbf{Proof}]
		Let $V(x)$ be given by \eqref{p} and $\varphi(x)$ be given by \eqref{phix}, with $a=x_0$ and $ C=C(\lambda,S)$. Let $x=x_1$ in \eqref{lr}, \eqref{rxa} and \eqref{rej}. We smooth $V(x)$ near $x_0, x_1$ to obtain $\widetilde V(x)$. Notice that by \eqref{crp}, a small perturbation of $V(x)$ will only give a small change of $R(x)$ and $R_j(x)$. Hence Lemma \ref{4.2} still holds with slightly larger constants. We complete the proof.		
	\end{proof}

	\begin{proof}[\textbf{Proof of Theorems \ref{3} and \ref{4}.}]
		
		With the help of Proposition \ref{ll}, the proofs of Theorems \ref{3} and \ref{4} follow from the construction step by step as appearing in \cite{JL19,LO17,L19stark}. {(We mention that although the models in \cite{JL19,LO17,L19stark} are second-order differential equations, the construction still works here because it only relies on Proposition \ref{ll}.)} We only give an outline of the  proof here. Let $\{N_r\}_{r\in\Z^+}$ be a non-decreasing sequence which goes to infinity arbitrarily slowly depending on $h(x)$ \footnote{{For most $r\in\mathbb{N}$, we have $N_{r+1}=N_r$,  and when $N_{r+1}>N_r$, we take $N_{r+1}=N_r+1$. This will  ensure $N_r$ increases to infinity  slowly.}}. We further assume $N_{r+1}=N_r+1$ when $N_{r+1}>N_r$. At the $r$th step, we take $N_r$ eigenvalues into consideration. Applying Proposition 3.3, we construct potentials with $N_r$ pieces, where each piece comes from \eqref{thm141} with $\lambda$ being an eigenvalue.
		%We repeat the construction of the $N_r$ pieces $K_r$ times. 
		The main difficulty is to control the size of each piece (denote by $T_r$). The construction in \cite{JL19,LO17,L19stark} only uses inequalities \eqref{thm141}, \eqref{thm142} and \eqref{thm143} to obtain appropriate $T_r$ and $N_r$. Hence Proposition \ref{ll} implies Theorems \ref{3} and \ref{4}.

	\end{proof}

	\section*{Acknowledgments}
This work was completed as part of the 2022 High School and Undergraduate Research Program  ``STODO" (Spectral Theory Of Differential Operators) at Texas A\&M University.
We would like to thank  Wencai Liu for managing the program, introducing this project and many inspiring discussions. 
  %We would like to thank everybody involved with organizing, lecturing, and mentoring at the REU
 The authors are also grateful to the anonymous referee, whose comments led to an improvement of our manuscript.
%	The authors would also like to thank our mentor Professor Wencai Liu for introducing this project to them and for 
%	K. Lyu was supported by the National Natural Science Foundation of China (11871031).
	This work was partially supported by NSF DMS-2015683 and DMS-2000345.
	
	\bibliographystyle{abbrv} % abbrv
	\bibliography{reference}
	
\end{document}